\theoremstyle{plain} 
\newtheorem{theorem}{Теорема}
\theoremstyle{definition} 
\newtheorem{remark}{Замечание}
\newtheorem{example}{Пример}
\numberwithin{equation}{section} \numberwithin{theorem}{section}
\numberwithin{lemma}{section} \numberwithin{proposition}{section}
\numberwithin{corollary}{section} \numberwithin{remark}{section}
\numberwithin{definition}{section} \numberwithin{example}{section}
\newcommand*{\affaddr}[1]{#1} 
\newcommand*{\affmark}[1][*]{\textsuperscript{#1}}
\begin{document}

\renewcommand{\thesection}{}
\renewcommand{\thesubsection}{\arabic{subsection}}
\renewcommand{\thetheorem}{\arabic{theorem}}
\renewcommand{\theequation}{\arabic{equation}}
\renewcommand{\thedefinition}{\arabic{definition}}
\renewcommand{\thecorollary}{\arabic{corollary}}
\renewcommand{\thelemma}{\arabic{lemma}}
\renewcommand{\theexample}{\arabic{example}}
\renewcommand{\theremark}{\arabic{remark}}

\title{\textbf{Адаптивный проксимальный метод для вариационных неравенств}}
\author{
\emph{Ф.~С.~Стонякин\affmark[1, 2], А.~В.~Гасников\affmark[2, 3, 4]},\\ \emph{П.~Е.~Двуреченский\affmark[4, 5], А.~А.~Титов\affmark[2]}\\\\\small
\affaddr{\affmark[1]\textbf{Крымский федеральный университет имени В.~И.~Вернадского},\\\small пр-кт Академика Вернадского, 4, Симферополь, РФ, 295007;}\\\small
\affaddr{\affmark[2]\textbf{Московский физико-технический институт},\\\small Институтский пер., 9, Долгопрудный, РФ, 141070;} \\\small
\affaddr{\affmark[3]\textbf{Национальный исследовательский университет "Высшая школа экономики"}, \\\small Кочновский проезд, 3, Москва, РФ, 125319;}\\\small
\affaddr{\affmark[4]\textbf{Институт проблем передачи информации имени А.~А.~Харкевича РАН},\\\small Большой Каретный пер., 19, Москва, РФ, 127051;}\\\small
\affaddr{\affmark[5]\textbf{Weierstrass Institute for Applied Analysis and Stochastics},\\\small Mohrenstr. 39, Berlin, Germany, 10117.}\\
}
\date{}
\maketitle

\begin{abstract}
В статье предложен новый аналог проксимального зеркального метода А.~С.~Немировского с адаптивным выбором констант в минимизируемых прокс-отображениях на каждой итерации для вариационных неравенств с липшицевым полем. Получены оценки необходимого числа итераций для достижения заданного качества решения вариационного неравенства. Показано, как можно обобщить предлагаемый подход на случай гёльдерова поля. Рассмотрена модификация предлагаемого алгоритма в случае неточного оракула для оператора поля.
\end{abstract}

Вариационные неравенства (ВН) нередко возникают в самых разных проблемах оптимизации и имеют многочисленные приложения в математической экономике, математическом моделировании транспортных потоков, теории игр и других разделах математики (см., например \cite{FaccPang_2003}). Исследования в области методики решения ВН активно продолжаются (см., например \cite{Antipin_2000}~--- \cite{Nesterov_2007}).

Наиболее известным аналогом градиентного метода для ВН является экстраградиентный метод Г.М. Корпелевич \cite{Korpelevich}. Одним из современных вариантов экстраградиентного метода является проксимальный зеркальный метод А.С. Немировского \cite{Nemirovski_2004}. Недавно Ю.Е. Нестеровым в \cite{Nesterov_2015} предложен новый алгоритм решения задач выпуклой минимизации с адаптивным выбором констант, который в случае липшицевости градиента целевой функции не требует знания этой константы Липшица (см. также раздел 5 пособия \cite{Gasn_2017}). В настоящей статье мы на базе разработанного нами критерия \eqref{4} предлагаем похожий аналог проксимального зеркального метода А.С. Немировского для решения вариационных неравенств (Алгоритм \ref{alg:SIGM}). Также мы распространяем предлагаемую методику на постановку задачи нахождения решения вариационного неравенства с неточным оракулом для оператора поля (Алгоритм \ref{alg:SIGM2}).

Для некоторого оператора $g: Q \rightarrow \mathbb{R}^n$, заданного на выпуклом компакте $Q\subset \mathbb{R}^n$ будем рассматривать {\it сильные вариационные неравенства} вида
\begin{equation}\label{1}
  \langle g(x_*),\ x_*-x\rangle \leqslant 0,
\end{equation}
где $g$ удовлетворяет условию Липшица. Отметим, что в \eqref{1} требуется найти $x_* \in Q$ (это $x_*$ и называется решением ВН) для которого
\begin{equation}\label{2}
  \max_{x\in Q}\langle g(x_*),\ x_*-x\rangle \leqslant 0.
\end{equation}
В случае монотонного поля $g$ наш подход позволяет рассматривать также {\it слабые вариационные неравенства}
\begin{equation}\label{11}
  \langle g(x),\ x_*-x\rangle \leqslant 0.
\end{equation}
Обычно в \eqref{11} требуется найти $x_* \in Q$, для которого \eqref{11} верно при всех $x \in Q$.

Начнём с некоторых вспомогательных понятий, соглашений и обозначений. Всюду далее будем полагать, что множество $Q$ выпукло и компактно в пространстве $\mathbb{R}^n$ с нормой $\|\cdot\|$ (вообще говоря, не евклидовой), а $\|\cdot\|_{\ast}$~--- сопряжённая к $\|\cdot\|$ норма. Пусть задана 1-сильно выпуклая функция $d$ относительно $\|\cdot\|$, которая дифференцируема во всех точках $x\in Q$. Можно ввести соответствующую $d$ \emph{дивергенцию Брэгмана}
\begin{equation}\label{3}
  V(x,y)=d(x)-d(y)-\langle \nabla d(y),x-y\rangle\ \forall x,y\in Q,
\end{equation}
где $\langle \cdot,\cdot\rangle$~--- скалярное произведение в $\mathbb{R}^n$.

Для решения задачи \eqref{1}~--- \eqref{2} мы предлагаем следующий адаптивный проксимальный зеркальный метод (АПЗМ). Пусть задано число $\varepsilon > 0$ (точность решения) и начальное приближение $x^0 = \mathrm{arg}\min\limits_{x\in Q} d(x) \in Q$.

Опишем ($N+1$)-ю итерацию предлагаемого алгоритма ($N = 0, 1, 2, ...$), положив изначально $N:=0$.
\begin{algorithm}[h!]
\SetAlgoNoLine
1. $N:=N+1$, $L^{N+1} := L^{N}/2$.\\
2. Вычисляем
$$y^{N+1} := \mathrm{arg}\min_{x\in Q} \left\{\left\langle g(x^N),x-x^N \right\rangle+L^{N+1}V(x,x^N)\right\},$$
$$x^{N+1} := \mathrm{arg}\min_{x\in Q}  \left\{\left\langle g(y^{N+1}),x-x^N \right\rangle+L^{N+1}V(x,x^N)\right\}.$$
3. Если верно $\left\langle g(y^{N+1})-g(x^N),y^{N+1}-x^{N+1} \right\rangle \leqslant$
\begin{equation}\label{4} \leqslant L^{N+1}V(y^{N+1},x^{N})+L^{N+1}V(x^{N+1},y^{N+1}), \end{equation}
то переходим к следующей итерации (пункт 1).\\
4. Если не выполнено \eqref{4}, то увеличиваем $L^{N+1}$ в 2 раза: $L^{N+1}:= 2L^{N+1}$ и переходим к пункту 2.\\
5. Критерий остановки метода: $\sum\limits_{k = 0}^{N-1} \frac{1}{L^{k+1}} \geqslant \frac{R^2}{\varepsilon},$ где $R^2 = \max\limits_{x \in Q} V(x, x^0).$
\caption{АПЗМ}
\label{alg:SIGM}
\end{algorithm}

\newpage

\begin{theorem}\label{th1}
После остановки Алгоритма \ref{alg:SIGM} справедлива оценка:
\begin{equation}\label{6}
\sum\limits_{k = 0}^{N-1}\frac{1}{L^{k+1}} \left\langle g(y^{k+1}),y^{k+1}-x\right \rangle \leqslant V(x,x^0)-V(x,x^N).
  \end{equation}
\end{theorem}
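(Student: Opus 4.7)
\medskip

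The plan is to derive a per-iteration bound of the form
$$\tfrac{1}{L^{k+1}}\langle g(y^{k+1}), y^{k+1}-x\rangle \leqslant V(x,x^{k})-V(x,x^{k+1}),$$
after which the claim of the theorem follows by telescoping over $k=0,\ldots,N-1$. The main engine will be the three-point identity for the Bregman divergence,
$$\langle \nabla d(u)-\nabla d(v),\, x-u\rangle = V(x,v)-V(x,u)-V(u,v),$$
applied at the two proximal steps of the algorithm.

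First, I would write down the first-order optimality condition for $x^{k+1}$: for every $x\in Q$,
$$\langle g(y^{k+1})+L^{k+1}(\nabla d(x^{k+1})-\nabla d(x^{k})),\, x-x^{k+1}\rangle \geqslant 0,$$
which, via the three-point identity, yields
$$\tfrac{1}{L^{k+1}}\langle g(y^{k+1}),\, x^{k+1}-x\rangle \leqslant V(x,x^{k})-V(x,x^{k+1})-V(x^{k+1},x^{k}).$$
Applying the analogous optimality condition for $y^{k+1}$ with the test point $x=x^{k+1}$ gives
$$\tfrac{1}{L^{k+1}}\langle g(x^{k}),\, y^{k+1}-x^{k+1}\rangle \leqslant V(x^{k+1},x^{k})-V(x^{k+1},y^{k+1})-V(y^{k+1},x^{k}).$$

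Next, I would split
$$\langle g(y^{k+1}),\, y^{k+1}-x\rangle = \langle g(y^{k+1})-g(x^{k}),\, y^{k+1}-x^{k+1}\rangle + \langle g(x^{k}),\, y^{k+1}-x^{k+1}\rangle + \langle g(y^{k+1}),\, x^{k+1}-x\rangle.$$
The first summand is controlled precisely by the adaptive acceptance test \eqref{4} (this is exactly the reason for choosing that stopping criterion and is why the line-search loop produces a step size satisfying it); the remaining two are handled by the two inequalities above. Adding the three bounds, a clean cancellation occurs: the terms $L^{k+1}V(y^{k+1},x^{k})$ and $L^{k+1}V(x^{k+1},y^{k+1})$ from \eqref{4} are cancelled by their negatives coming from the optimality inequality for $y^{k+1}$, and the two copies of $L^{k+1}V(x^{k+1},x^{k})$ also cancel. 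After dividing by $L^{k+1}$ one obtains the per-iteration bound displayed above.

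The only non-routine point is verifying that the cancellation between the Bregman terms generated by the two optimality inequalities and by the acceptance criterion \eqref{4} is exact; this is really a matter of carefully tracking signs in the three-point identity, and once it is done the proof is concluded by summing from $k=0$ to $N-1$, noting that the right-hand side telescopes to $V(x,x^{0})-V(x,x^{N})$, which is \eqref{6}.
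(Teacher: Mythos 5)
Your proposal is correct and follows essentially the same route as the paper's own proof: the same three-term decomposition of $\langle g(y^{k+1}),y^{k+1}-x\rangle$, the same two optimality conditions combined with the three-point (Bregman) identity, the same use of the acceptance test \eqref{4} to control the cross term, and the same exact cancellation leading to the telescoping bound \eqref{9}. Nothing is missing.
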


\begin{proof}
Непосредственно можно проверить равенства:
\begin{equation}\label{7}
   \left\langle \nabla_xV(x,x^k)\big|_{x=x^{k+1}}, \, x-x^{k+1}\right\rangle= V(x,x^k)-V(x,x^{k+1})-V(x^{k+1},x^k),
\end{equation}
\begin{equation}\label{8}
  \left\langle \nabla_xV(x,x^k)\big|_{x=y^{k+1}}, \, x-y^{k+1}\right\rangle= V(x,x^k)-V(x,y^{k+1})-V(y^{k+1},x^k).
\end{equation}
Далее, для всякого $x\in Q$ и $k=\overline{0,\ N-1}$ верны неравенства:
$$
\left\langle \nabla_x \left(\left\langle g(x^k),x-x^k \right\rangle+L^{k+1}V(x,x^k)\right)\big|_{x=y^{k+1}},\ x-y^{k+1} \right\rangle \geqslant 0,
$$
$$
\left\langle \nabla_x \left(\left\langle g(y^{k+1}),x-x^k \right\rangle+L^{k+1}V(x,x^k)\right)\big|_{x=x^{k+1}},\ x-x^{k+1} \right\rangle \geqslant 0.
$$

Поэтому
$$
\left\langle g(y^{k+1}),x^{k+1}-x \right\rangle \leqslant L^{k+1}V(x,x^k)- L^{k+1}V(x,x^{k+1})-L^{k+1}V(x^{k+1},x^k)$$
и
$$
\left\langle g(x^{k}),y^{k+1}-x \right\rangle \leqslant L^{k+1}V(x,x^{k})-L^{k+1}V(x,y^{k+1})-L^{k+1}V(y^{k+1},x^k),
$$
откуда для всякого $k=\overline{0,\ N-1}$ с учетом \eqref{4} мы имеем:
$$
\left\langle g(y^{k+1}),y^{k+1}-x \right\rangle = \left\langle g(y^{k+1}),x^{k+1}-x \right\rangle+ \left\langle g(x^{k}),y^{k+1}-x^{k+1}\right\rangle +
$$
$$
+ \left\langle g(y^{k+1})-g(x^k),y^{k+1}-x^{k+1}\right\rangle \leqslant
$$
$$
\leqslant L^{k+1}V(x,x^{k})-L^{k+1}V(x,x^{k+1})-L^{k+1}V(x^{k+1},x^{k})+L^{k+1}V(x^{k+1},x^{k})-
$$
$$
-L^{k+1}V(x^{k+1},y^{k+1})-L^{k+1}V(y^{k+1},x^{k})+L^{k+1}V(y^{k+1},x^{k})+L^{k+1}V(x^{k+1},y^{k+1}),
$$
т.е. верно
\begin{equation}\label{9}
  \frac{1}{L^{k+1}} \left\langle g(y^{k+1}),y^{k+1}-x \right\rangle \leqslant V(x,x^k)-V(x,x^{k+1}).
\end{equation}

После суммирования неравенств \eqref{9} по $k=\overline{0,\ N-1}$ получим
$$
\sum_{k=0}^{N-1} \frac{1}{L^{k+1}} \langle g(y^{k+1}),y^{k+1}-x\rangle \leqslant V(x,x^0)-V(x,x^N),
$$
что и требовалось.
\end{proof}

Допустим, что векторное поле $g$ удовлетворяет условию
\begin{equation}\label{eq10}
||g(x)-g(y)||_{\ast}\leqslant L||x-y|| \quad \forall x,y\in Q.
\end{equation}
Всюду далее для краткости будем обозначать
$$
S_N=\sum\limits_{k = 0}^{N-1}\frac{1}{L^{k+1}}.
$$
Условие \eqref{eq10} означает, что для произвольных $ x,y,z\in Q$
$$\langle g(y)-g(z), y-z \rangle \leqslant||g(y)-g(x)||_{\ast}\cdot||y-z||\leqslant L||y-x||\cdot||y-z||,$$
откуда в силу справедливого для всех $a, b \in \mathbb{R}$, $\delta>0$ неравенства
$ab\leqslant \frac{a^{2}}{2}+\frac{b^{2}}{2}$
при произвольных $x,y,z\in Q$ верно
$$\langle g(y)-g(x), y-z \rangle \leqslant\frac{L}{2}||y-x||^{2}+\frac{L}{2}||y-z||^{2}\leqslant LV(y,x)+LV(z,y).$$

Тогда ввиду \eqref{6} для всякого $x \in Q$
$$\min_{k=\overline{0,N-1}} \left\langle g(y^{k+1}), y^{k+1}-x\right\rangle \leqslant \frac{1}{S_N}\sum\limits_{k=0}^{N-1}\frac{1}{L^{k+1}} \left\langle g(y^{k+1}), y^{k+1}-x \right\rangle \leqslant$$
$$\leqslant\frac{1}{S_N}\left( V(x,x^{0})-V(x,x^{N})\right) \leqslant\frac{R^{2}}{S_N},$$
где $R^2 = \max\limits_{x\in Q}V(x,x^{0})$.

Если потребовать, чтобы
\begin{equation}\label{eq12}
\max_{x\in Q} \min_{k=\overline{0,N-1}}\left\{\left\langle g(y^{k+1}), y^{k+1}-x\right\rangle\right\} \leqslant \frac{1}{S_N} \max_{x\in Q}\sum\limits_{k=0}^{N-1}\frac{1}{L^{k+1}} \left\langle g(y^{k+1}), y^{k+1}-x \right\rangle \leqslant \varepsilon,
\end{equation}
то в случае $L^0 \leqslant 2 L$ можно оценить количество итераций работы Алгоритма \ref{alg:SIGM}. Действительно, $L^0 \leqslant 2 L$ означает, что $L^{k+1} \leqslant 2 L$ для всякого $k = \overline{{0, N-1}}$ и поэтому
$$
\frac{R^{2}}{S_N}\leqslant \varepsilon\;\text{ верно при}\;N\geqslant\frac{2LR^{2}}{\varepsilon}.
$$
Таким образом, справедлива следующая
\begin{theorem}\label{Thm_Lip}
Если выполнено условие \eqref{eq10} для поля $g$, то Алгоритм \ref{alg:SIGM} работает не более \begin{equation}\label{eqv5}
N=\left\lceil\frac{2LR^{2}}{\varepsilon}\right\rceil
\end{equation}
итераций. Если выбрать $L^{0} \leqslant 2L$, то верно \eqref{eq12}.
\end{theorem}
\begin{remark}
Если $g \not\equiv 0$, то выполнения условия $L^0 \leqslant 2 L$ можно добиться, выбрав
$$L^0 := \frac{\|g(x) - g(y)\|_{*}}{\|x - y\|} \; \text{  при  } \; g(x) \neq g(y).$$
\end{remark}
\begin{remark}
Заметим, что оценка числа итераций \eqref{eqv5} с точностью до числового множителя оптимальна для вариационных неравенств \cite{GuzNem_2015, NemYud_1979, Nemirovski_1994_95}.
\end{remark}

\begin{remark}\label{Rem_Hold}
Отметим, что похожими рассуждениями можно получить аналог теоремы \ref{Thm_Lip} для гёльдерова поля $g$. Точнее говоря, если справедливо
\begin{equation}\label{eq101}
||g(x)-g(y)||_{\ast}\leqslant L_{\nu}||x-y||^{\nu}
\end{equation}
при $\nu\in[0;1],\; x,y\in Q$, причем $L_{0}<+\infty$ (другие константы $L_{\nu}$ могут быть бесконечными), то утверждение теоремы \ref{Thm_Lip} выполняется, если
\begin{equation}
L^{0} \leqslant 2L = 2\inf_{\nu\in[0;1]} L_{\nu}\cdot\left(\frac{2L_{\nu}}{\varepsilon}\right)^{\frac{1-\nu}{1+\nu}}
\end{equation}
и
\begin{equation}\label{eqv_5.5.1} N=\left\lceil \inf_{\nu\in[0;1]} \left(\frac{2L_{\nu}R^{1+\nu}}{\varepsilon}\right)^{\frac{2}{1+\nu}}\right\rceil.
\end{equation}
\end{remark}

\begin{remark}
Для слабых ВН \eqref{3} имеем:
$$ \langle g(x),y^{k+1}-x \rangle = \langle g(y^{k+1}),y^{k+1}-x \rangle + \langle g(x)-g(y^{k+1}),y^{k+1}-x\rangle \leqslant \left \langle g(y^{k+1}),y^{k+1}-x \right\rangle,$$ поэтому критерий \eqref{eq12} можно заменить на
\begin{equation}\label{eq14}
\max_{x\in Q} \left \langle g(x), \widetilde{y}-x \right\rangle \leqslant \varepsilon, \text{ где } \widetilde{y} = \frac{\sum\limits_{k = 0}^{N-1} \frac{1}{L^{k+1}} \cdot y^{k+1}}{S_N}.
\end{equation}
Отметим, что именно оценку вида \eqref{eq14} обычно используют как критерий качества решения слабого ВН (см., например \cite{Nemirovski_2004}).
\end{remark}

В завершении отметим, что рассматриваемую в работе методику можно распространить на случай неточного оракула $\widetilde{g}(x,\delta_c,\delta_u)$ для поля $g$. Поясним смысл, который мы вкладываем в это понятие.

Будем полагать, что существует фиксированное $\delta_u > 0$ такое, что для всякого $\delta_c > 0$ существует константа $L(\delta_c) \in (0, +\infty)$, для которой $\forall x, y, z \in Q$ верно
\begin{equation}\label{eq:g_or_def}
\langle \widetilde{g}(y,\delta_c,\delta_u) - \widetilde{g}(x,\delta_c,\delta_u), y - z \rangle \leqslant L(\delta_c) \left(V(y,x) + V(z, y)\right) + \delta_c+\delta_u.
\end{equation}

\begin{remark}
Заметим, что мы здесь считаем, что $\delta_c$~--- контролируемая погрешность оракула (её возможно бесконечно уменьшать и для заданной точности решения $\varepsilon > 0$ мы всюду будем полагать, что $\delta_c = \frac{\varepsilon}{2}$). С другой стороны, погрешность $\delta_u$ мы полагаем неконтролируемой.
\end{remark}

\begin{example}
Пусть поле $g$ удовлетворяет условию Липшица и заданы его неточные значения на $Q$, т.е. существует
$\bar{\delta}_u > 0$ и в произвольной точке $x \in Q$ можно вычислить величину $\bar{g}(x)$, удовлетворяющую условию $\|\bar{g}(x)-g(x)\|_* \leqslant \bar{\delta}_u$. В таком случае $\bar{g}(x) = \widetilde{g}(x, \delta_c, \delta_u)$ удовлетворяет \eqref{eq:g_or_def}, если положить $\delta_u = \bar{\delta}_u$, а также $\delta_c = 0$ и $L(\delta_c) \equiv L$.
\end{example}

При условиях \eqref{eq:g_or_def} мы предлагаем следующую модификацию Алгоритма \ref{alg:SIGM}.
Пусть задано $\varepsilon > 0$ (точность решения) и начальное приближение\\ $x^0 = \mathrm{arg}\min\limits_{x\in Q} d(x) \in Q$.

Опишем ($N+1$)-ю итерацию предлагаемого алгоритма ($N = 0, 1, 2, ...$), положив изначально $N:= 0$.

\begin{algorithm}[h!]
\SetAlgoNoLine
1. $N:=N+1$, $L^{N+1} := L^{N}/2$.\\
2. Вычисляем:
$$y^{N+1} := \mathrm{arg}\min_{x\in Q} \left\{\left\langle \widetilde{g}\left(x^N, \frac{\varepsilon}{2}, \delta_u\right), x-x^N \right\rangle+L^{N+1}V(x, x^N)\right\},$$
$$x^{N+1} := \mathrm{arg}\min_{x\in Q}\left\{\left\langle \widetilde{g}\left(y^{N+1}, \frac{\varepsilon}{2}, \delta_u\right), x-x^N \right\rangle+L^{N+1}V(x,x^N)\right\}.$$
3. Если верно $\left\langle \widetilde{g}(y^{N+1}, \frac{\varepsilon}{2},\delta_u)- \widetilde{g}(x^N, \frac{\varepsilon}{2} ,\delta_u), y^{N+1}-x^{N+1} \right\rangle \leqslant$ $$\leqslant L^{N+1}V(y^{N+1},x^{N})+L^{N+1}V(x^{N+1},y^{N+1})+ \frac{\varepsilon}{2} + \delta_{u},$$
то переходим к следующей итерации (пункт 1).\\
4. Иначе увеличиваем в 2 раза константу $L^{N+1}:= 2L^{N+1}$ и переходим к пункту 2.\\
5. Критерий остановки метода: $\sum\limits_{k = 0}^{N-1} \frac{1}{L^{k+1}} \geqslant \frac{2 R^2}{\varepsilon}$, где $R^2 = \max\limits_{x \in Q} V(x, x^0).$
\caption{<<Неточный>> АПЗМ}
\label{alg:SIGM2}
\end{algorithm}

\newpage

Аналогично доказательству теоремы \ref{Thm_Lip} проверяется следующее утверждение.

\begin{theorem}\label{th3}
Пусть $\widetilde{g}$ удовлетворяет \eqref{eq:g_or_def}. Тогда после остановки Алгоритма \ref{alg:SIGM2} справедлива оценка:
\begin{equation}\label{6New}
\frac{1}{S_N} \sum_{k=0}^{N-1} \frac{1}{L^{k+1}} \left\langle \widetilde{g}\left(y^{k+1}, \frac{\varepsilon}{2}, \delta_u\right), y^{k+1}-x\right \rangle \leqslant  \frac{V(x,x^0)-V(x,x^N)}{S_N}+\frac{\varepsilon}{2}+\delta_u.
\end{equation}
\end{theorem}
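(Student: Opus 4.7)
The plan is to mimic the proof of Theorem~\ref{th1} step for step, inserting the extra slack $\tfrac{\varepsilon}{2}+\delta_u$ wherever the inexactness of the oracle enters.

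First I would reuse the Bregman-divergence identities \eqref{7} and \eqref{8}, which depend only on the definition \eqref{3} of $V$ and are independent of which oracle is queried. Then I would write down the first-order optimality conditions for the two proximal subproblems in step~2 of Algorithm~\ref{alg:SIGM2}: exactly as in Theorem~\ref{th1} they give, for every $x\in Q$,
\begin{equation*}
\bigl\langle \widetilde{g}(y^{k+1},\tfrac{\varepsilon}{2},\delta_u),\, x^{k+1}-x\bigr\rangle \leqslant L^{k+1}V(x,x^{k})-L^{k+1}V(x,x^{k+1})-L^{k+1}V(x^{k+1},x^{k})
\end{equation*}
and the analogous inequality with $y^{k+1}$ in place of $x^{k+1}$ and $\widetilde{g}(x^{k},\tfrac{\varepsilon}{2},\delta_u)$ in place of $\widetilde{g}(y^{k+1},\tfrac{\varepsilon}{2},\delta_u)$.

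Next I would perform the same three-term decomposition as in the proof of Theorem~\ref{th1}:
\begin{equation*}
\bigl\langle \widetilde{g}(y^{k+1},\tfrac{\varepsilon}{2},\delta_u),\, y^{k+1}-x\bigr\rangle
=\bigl\langle \widetilde{g}(y^{k+1},\tfrac{\varepsilon}{2},\delta_u),\, x^{k+1}-x\bigr\rangle
+\bigl\langle \widetilde{g}(x^{k},\tfrac{\varepsilon}{2},\delta_u),\, y^{k+1}-x^{k+1}\bigr\rangle
+\bigl\langle \widetilde{g}(y^{k+1},\tfrac{\varepsilon}{2},\delta_u)-\widetilde{g}(x^{k},\tfrac{\varepsilon}{2},\delta_u),\, y^{k+1}-x^{k+1}\bigr\rangle .
\end{equation*}
The first two summands are bounded by the proximal inequalities of the previous paragraph. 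For the third summand I now invoke the exit condition of step~3 of Algorithm~\ref{alg:SIGM2}, which differs from \eqref{4} precisely by the additive term $\tfrac{\varepsilon}{2}+\delta_u$. The pairs $\pm L^{k+1}V(x^{k+1},x^{k})$ and $\pm L^{k+1}V(y^{k+1},x^{k})$ and $\pm L^{k+1}V(x^{k+1},y^{k+1})$ cancel exactly as in Theorem~\ref{th1}, and the one-step bound becomes
\begin{equation*}
\frac{1}{L^{k+1}}\bigl\langle \widetilde{g}(y^{k+1},\tfrac{\varepsilon}{2},\delta_u),\, y^{k+1}-x\bigr\rangle
\leqslant V(x,x^{k})-V(x,x^{k+1})+\frac{1}{L^{k+1}}\Bigl(\tfrac{\varepsilon}{2}+\delta_u\Bigr).
\end{equation*}

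Finally I telescope this inequality over $k=0,\dots,N-1$ and divide by $S_N$. The Bregman terms collapse to $V(x,x^{0})-V(x,x^{N})$, while the error terms sum to $\bigl(\tfrac{\varepsilon}{2}+\delta_u\bigr)\cdot S_N$, so after division by $S_N$ they contribute exactly $\tfrac{\varepsilon}{2}+\delta_u$. This yields \eqref{6New}. I do not expect a real obstacle here: the only new ingredient compared with Theorem~\ref{th1} is the additive slack in the adaptive exit condition, and the cancellations happen for the same algebraic reason. The main thing to be careful about is that the $S_N$ in the denominator of the error term arises precisely because $\sum_{k=0}^{N-1}\tfrac{1}{L^{k+1}}=S_N$, which is why the bound takes the clean form $\tfrac{\varepsilon}{2}+\delta_u$ rather than a telescoping remainder.
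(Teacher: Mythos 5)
Your proposal is correct and is precisely the argument the paper intends: the text states that Theorem~\ref{th3} is proved fully analogously to Theorem~\ref{th1}, and your step-by-step repetition of that proof with the modified exit condition of step~3 (carrying the additive $\tfrac{\varepsilon}{2}+\delta_u$ through the one-step bound, telescoping, and dividing by $S_N$) is exactly that analogy. The key observation that $\sum_{k=0}^{N-1}\tfrac{1}{L^{k+1}}=S_N$ makes the error term collapse to $\tfrac{\varepsilon}{2}+\delta_u$ is handled correctly.
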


Неравенство \eqref{6New} означает, что после работы Алгоритма \ref{alg:SIGM2} будет верно:
\begin{equation}\label{equv}
\frac{1}{S_N} \sum_{k=0}^{N-1} \frac{1}{L^{k+1}} \left\langle \widetilde{g}\left(y^{k+1}, \frac{\varepsilon}{2}, \delta_u\right),y^{k+1}-x\right \rangle \leqslant \frac{2LR^2}{N} +\frac{\varepsilon}{2}+\delta_u,
\end{equation}
где $R^2 = \max\limits_{x, y \in Q} V(x, y)$. При условии $L^{0} \leqslant 2L$ можно показать, что Алгоритм \ref{alg:SIGM2} будет работать не более
$N=\left\lceil\frac{4LR^{2}}{\varepsilon}\right\rceil$ итераций. После его остановки заведомо верно неравенство:
$$
\max_{x\in Q}\min_{k=\overline{0,N-1}}\left\langle \widetilde{g}\left(y^{k+1}, \frac{\varepsilon}{2}, \delta_u\right), y^{k+1}-x\right \rangle \leqslant
$$
$$
\leqslant \frac{1}{S_N} \max\limits_{x \in Q} \sum_{k=0}^{N-1} \frac{1}{L^{k+1}} \left\langle \widetilde{g}\left(y^{k+1}, \frac{\varepsilon}{2}, \delta_u\right),y^{k+1}-x\right \rangle \leqslant \varepsilon+\delta_u,
$$
которое может рассматриваться как критерий качества найденного решения с погрешностью $\delta_u$.

\newpage

{\bf Финансовая поддержка}. Исследование Ф.С.~Стонякина, связанное с доказательством теоремы \ref{Thm_Lip}, Алгоритмом \ref{alg:SIGM}, а также замечанием \ref{Rem_Hold}, выполнено за счёт гранта Российского научного фонда (проект 18-71-00048). Исследование А.В.~Гасникова, связанное с Алгоритмом \ref{alg:SIGM2} и доказательством теоремы \ref{th3}, выполнено при поддержке гранта Президента Росскийской Федерации для государственной поддержки молодых российских учёных-докторов наук МД-1320.2018.1, а также при поддержке программы 5 топ 100 НИУ ВШЭ.

\end{document}